\newcommand{\RULE}[2]{\cfrac{#1}{\raisebox{-1mm}{\ensuremath{#2}}}}
\def\BibTeX{{\rm B\kern-.05em{\sc i\kern-.025em b}\kern-.08em
    T\kern-.1667em\lower.7ex\hbox{E}\kern-.125emX}}
\tikzset{join/.code=\tikzset{after node path={%
\ifx\tikzchainprevious\pgfutil@empty\else(\tikzchainprevious)%
edge[every join]#1(\tikzchaincurrent)\fi}}}
\tikzset{>=stealth',every on chain/.append style={join},
         every join/.style={->}}
\tikzstyle{labeled}=[execute at begin node=$\scriptstyle,
\newtheorem{Thm}{Theorem}
\newtheorem{Rmk}[Thm]{Remark}
\newtheorem{Pro}[Thm]{Proposition}
\theoremstyle{plain}
\theoremstyle{definition}
\title{\bf Axiomatization via translation: Hi{\. z}'s warning \\ for predicate logic}
 \author{Guillermo Badia }
\address{University of Queensland, Brisbane, Australia}
\email{g.badia@uq.edu.au}
 \author{John N. Crossley}
 \address{Monash University, Melbourne, Australia}
 \email{john.crossley@monash.edu}
   \author{Lloyd Humberstone }
\address{Monash University, Melbourne, Australia}
 \email{lloyd.humberstone@monash.edu}
\begin{document}


\maketitle

\begin{abstract}
The problems of logical translation of axiomatizations and the choice of primitive operators have surfaced several times over the years. An early issue was raised by  H. Hi{\. z} in the 1950s on the  incompleteness of translated calculi. Further pertinent work, some of it touched on here, was done in the 1970s by W. Frank and S. Shapiro, as well as by others in subsequent decades.  As we shall see, overlooking such possibilities has led to incorrect claims  of completeness being made  (e.g. by  J. L. Bell and A. B. Slomson as well as J. N. Crossley) for axiomatizations of classical predicate logic obtained by translation from axiomatizations suited to differently chosen logical primitives.  In this note we begin by discussing some problematic aspects of an early article by W. Frank on the difficulties of obtaining completeness theorems for translated calculi. Shapiro had established the incompleteness of Crossley's axiomatization by exhibiting a propositional tautology that was not provable.   In contrast, to deal with Bell and Slomson's system which is complete for propositional tautologies, we go on to show that taking a formal system for classical predicate calculus with the primitive $ \exists$, setting  $\forall x \phi(x) \stackrel{\text{def}}{=}\neg \exists x \neg \phi(x)$, and writing down a set of axioms and rules complete for the calculus with $\forall $ instead of $ \exists$ as primitive, does not guarantee completeness of the resulting system. In particular, instances of the valid schema $\exists x \phi (x) \rightarrow \exists x \neg \neg\phi (x)$ are not provable, which is analogous to what occurs in modal logic with $\Box$ and $\Diamond$.

\medskip

\noindent{\bf Keywords:}  predicate calculus, axiomatization, translation, incompleteness \medskip

\noindent{\bf 2020 Mathematics Subject Classification:}  Primary 03B10; Secondary 	03B05 \end{abstract}
\section{Introduction} \label{Intro}

A translation \textbf{t} from a language $\mathcal{L}$, thought of as the set of its formulas, to the language $\mathcal{L}'$ is a function from $\mathcal{L}$ to $\mathcal{L}'$ (on which one may impose further demands of compositionality etc., as desired). We say that \textbf{t} \textit{embeds} a consequence relation $\vdash$ on $\mathcal{L}$ in a consequence relation $\vdash'$ on $\mathcal{L}'$ when for all $\phi_1,\ldots, \phi_n, \psi \in \mathcal{L}$ when $\phi_1,\ldots, \phi_n \vdash \psi$ only if ${\bf t}(\phi_1),\ldots, {\bf t}(\phi_n) \vdash' {\bf t}(\psi)$ and that {\bf t} does so \emph{faithfully} when we have this with `only if' strengthened to `if and only if'.\footnote{A taxonomy of translations, bringing such refinements as those just alluded to -- compositionality, etc. -- can be found in \cite[Chp. 2]{French}, where further references to the extensive literature on this topic are also supplied.} If working with logics as sets of formulas  -- as we shall be here --  rather than as consequence relations,  the previous definitions apply by deleting everything to the left of the $\vdash$. A number of authors have assumed that such translations also preserve other syntactic (or proof-theoretic)  properties. One well-known example is Crossley's mistake in \cite[p. 19]{Crossley}, which resulted in the incompleteness of a putative axiomatization of classical predicate logic presented there, and another appears with this same effect, as we shall see below (Section \ref{Incompleteness}), in the celebrated Bell and Slomson \cite{Bell&Slomson}.\footnote{This problem does not, of course, affect the correctness of the model-theoretic results that constitute the core of the book.}
 Halmos in \cite{Halmos} had proposed an axiomatization of propositional calculus via translation that turned out to be incomplete. The inadequacies were noticed by Hi{\. z} in  \cite{Hiz}, and later Frank \cite{Frank} attempted a generalization of Hi{\. z}'s observation, which itself ran into difficulties, several of them noted by  
Shapiro \cite{Shapiro}; further problems with Frank's discussion will be described below.\footnote{The oversight is briefly alluded to in \cite{CorcoranShapirofirst}; and more fully (see p.\,85) in \cite{CorcoranShapirosecond}. Corcoran and Shapiro between them produced three papers mentioning this point \cite{CorcoranShapirofirst, CorcoranShapirosecond, Shapiro},  but curiously none mentions either of the other two. A Spanish translation of \cite{CorcoranShapirosecond} incorporated some typographical corrections: \cite{CorcoranShapiro3}. Note also that we refer to the current mistake as Crossley's oversight without mentioning his coauthors, because the different chapters of \cite{Crossley} were written by different authors and the present issue arises in a Chapter by Crossley. (This may explain -- which is not to say \textit{justify} -- the lack of uniformity in format and style noted at p.\,93 of \cite{CorcoranShapirosecond}).} We shall be supplementing the discussion in \cite{Shapiro} with our own criticism of \cite{Frank} below in Section \ref{franksection}. Shapiro's key point was that na\"ively axiomatizing  $[\neg, \wedge, \exists]$ via translation in terms of the primitives  $[\neg, \rightarrow, \forall]$ is not possible in general because of incompleteness already in the \textit{propositional} fragment. He showed that certain propositional tautologies are not provable in Crossley's system (specifically, certain instances of $(\phi \wedge \phi) \rightarrow \phi $); Shapiro also notes a specifically quantificational deficit in the axiomatization: see note \ref{Shapironote}.

Crossley’s mistake can be quickly corrected, as Shapiro \cite[p. 249, note 3]{Shapiro}  remarked. In \cite{Crossley}, the primitives were  $[\neg, \wedge, \exists]$ with $[\rightarrow, \forall]$ being defined in the usual way, namely $\phi \rightarrow \psi \stackrel{\text{def}}{=}\neg (\phi \wedge \neg \psi)$ and $\forall x \phi(x) \stackrel{\text{def}}{=}\neg \exists x \neg \phi(x)$,  but the axiomatization was one for $[\neg, \rightarrow, \forall]$.   One might ask, though, whether predicate calculus with the primitives $[\neg, \wedge, \exists]$ can be axiomatized via translation in terms of $[\neg,  \rightarrow, \forall]$ when the propositional fragment is indeed complete (by adding, for example, the axiom schema $(\phi \wedge \psi) \leftrightarrow \neg(\phi \rightarrow \neg \psi)$, as suggested by Shapiro).\footnote{\label{Shapironote}In fact, it is not entirely transparent that this would do exactly what Shapiro probably had in mind (because of the subtleties involved in spelling out the meaning of $\leftrightarrow$ in this context), but at any rate the idea is clear: to make $(\phi \wedge \psi) $ replaceable with $ \neg(\phi \rightarrow \neg \psi)$. Hence, in the interest of simplicity, we will set this issue aside. Even though Shapiro suggests also adding the axiom schema $\neg \exists x \neg \phi \leftrightarrow \forall x \phi$, he does not show why this is \emph{necessary}, which is what we do in the present note.   Hi{\. z} \cite{Hiz} ends his second last paragraph with these words: `A translation of a complete set of axioms to another set of primitives would be complete only if from the resulting axioms the definitions of the first set of primitives followed.' Probably Shapiro is following this advice in the simplest way possible: to make sure it is provable, take it as an additional axiom. What we show here, though, is precisely why, in detail, things go wrong when the advice is not followed: namely that the replacement of equivalents -- `congruentiality' as it is called in the propositional (esp.\ modal) case, originally in \cite{Makinson}  -- breaks down in the defective axiomatizations.}   In this note, we explore the issue of the failure of the `replacement of equivalents' property that is at the root of both Hi{\. z}'s and Shapiro's examples, with special attention to the failure of provably equivalent open formulas to be interreplaceable within the scope of quantifiers. Hence, the difficulty with axiomatizing via translation is not a purely propositional issue, despite the literature containing mostly discussions focusing on what goes wrong only at the propositional level. As with the axiomatization in Crossley \cite{Crossley}, so also the presentation given in the celebrated text Bell and Slomson \cite{Bell&Slomson} is afflicted by this incompleteness problem. In \cite{Bell&Slomson} the primitives are also also  $[\neg, \wedge, \exists]$, while the axioms are given via translation in terms of $[\neg, \wedge, \vee, \rightarrow, \forall]$. By contrast with the case of \cite{Crossley}, in this case the axiomatization is complete for the propositional fragment.\footnote{In personal communications, both John Bell and Alan Slomson have confirmed that this problem was unknown to them.} However, the axiomatization in \cite{Bell&Slomson} would, of course, be complete if the choice of primitive quantifier had instead been $ \forall$.\footnote{Similarly, the axiomatization in \cite{Crossley} is complete if the choice of primitives had been $[\neg, \rightarrow, \forall]$. The effect of the generalization rule is obtained here by taking as instances of the axiom schemata all possible generalizations of the formulas directly instantiating those schemata, where this covers not only their complete universal closures but also the result of universally quantifying any number (including zero) of the free variables in those formulas.}

The following section contains our discussion of Frank's article  \cite{Frank} touched on above, and still having some current significance for the present topic. Then we turn, in Section \ref{Incompleteness}, to the topic of incomplete axiomatizations of predicate logic resulting from insufficient attention to the way completeness depends on choosing axioms and rules that are appropriate for the logical primitives. We are especially concerned with translating an axiomatization of classical predicate logic suited to one set of primitives into an axiomatization putatively complete for another set. Recent work~\cite{Casanovas,Kennedy} and the not so recent \cite{McGee} work on logical operations in predicate logic -- going back, in particular, to \cite{Henkin} -- provide a replacement for  the use of matrix methodology in showing unprovability (and hence independence and incompleteness) in propositional logic.

\section{Problems with Frank's note}\label{franksection}

On the first page of his note, Frank \cite{Frank} gives us the background to Hi{\. z}'s discussion, telling us that in his 1956 paper \cite{Hiz}, 
\begin{quotation}\small 
\noindent Halmos takes the Hilbert--Ackermann axioms for a sentential logic of $\mathord{\sim}$ and $\lor$, and the rule of inference 
\[  \RULE{p \,\lor q \qquad {\mathord{\sim} p}}{q}\]
and provides an axiom system for $\mathord{\sim}$ and $\&$ by means of the definition
\[p \lor q \leftrightarrow \mathord{\sim}(\mathord{\sim} p \mathop{\&} \mathord{\sim} q).\]\end{quotation}
Given notational conventions more widely prevailing today, it would now be better to write $p, q$ as $A, B$ or $\phi, \psi$ to make it clear that these are schematic letters for arbitrary formulas, rather than specifically propositional variables (or sentence letters), but it would have been  better to avoid $\leftrightarrow$ in the displayed definition and stuck to the formulations used by Halmos and Hi{\. z}, since the $\leftrightarrow$-formulation misleadingly suggests that an additional (biconditional) connective is somehow involved, raising questions about how this is related to the chosen primitives $\mathord{\sim}$ and $\lor$ for Hilbert and Ackermann, or $\mathord{\sim}$ and $\&$ for Halmos.\footnote{Our preferred way of recording a definition is with the `$\stackrel{\text{def}}{=}$' notation, so as to preserve as much neutrality as we can among competing accounts of what a definition (for logical vocabulary, in particular) is and does -- such as for example the metalinguistic and object-linguistic conceptions of definition contrasted in {\S}3 of \cite{Humberstone2} (or 3.16 in \cite{Humberstone3}). To minimize disruption, though, we prefer not to raise such issues every time we echo one of the authors discussed here in calling a $\leftrightarrow$-schema a definition. See also note~\ref{Shapironote} above.}  A more substantive issue arises over the rule Frank tells us Hilbert and Ackermann use. A glance at the actual text on p.~28 of \cite{Hilbert&Ackermann} reveals that the rule they employ takes us, not from premises $\phi \lor \psi$ and $\mathord{\sim}\phi$ to conclusion $\psi$, but rather from $\mathord{\sim}\phi \lor \psi$ and $\phi$ to $\psi$. But the substantive question this raises -- as to whether this change makes a difference to the set of provable formulas -- is not one we need to consider here, since it does not bear on Frank's commentary on Hi{\. z}

 Hi{\. z}'s paper uses a three-element matrix with tables for $\land$ (as we shall now write in place of $\&$) and $\neg$ (as we shall write for $\mathord{\sim}$) that validates all theorems forthcoming on the basis of Halmos's $\{\lor,\neg\}$-axiomatization but not all classical tautologies in the $\{\land,\neg\}$-fragment, showing the axiomatization to be incomplete, despite the completeness of the $\{\lor,\neg\}$-axiomatization  of Hilbert and Ackermann on which it was based, replacing every $\phi \lor \psi$ in the latter by $\neg(\neg \phi \land \neg \psi)$. The main point of Frank's discussion is that this ingenious matrix argument was not needed, because a simpler general observation already suffices to show that the $\{\land,\neg\}$-fragment of classical propositional logic could not possibly have been completely axiomatized by Halmos's axiomatization. And this general observation appears more or less as follows (we may call this `Frank's Claim'):\footnote{We shall replace Frank's  notation `${\rm A}1,\ldots, {\rm A}N$' and `${\rm R}1,\ldots,{\rm R}M$', for axioms and rules respectively, with `$A_1,\ldots,A_n$' and `$R_1,\ldots, R_m$'. The same passage is quoted, though in this case verbatim, on the opening page of \cite{Shapiro}.}

\begin{quotation}\noindent \small If {\sf T}($A$) is the closure of a formal system in a language $\mathcal{L}$, with axioms $A_1,\ldots,A_n$ and rules $R_1,\ldots, R_m$, and {\bf t} a rule of translation from $\mathcal{L}$ to $\mathcal{L}'$, then {\sf T}$'$, the closure of ${\bf t}(A_1),\ldots,{\bf t}(A_n),{\bf t}(R_1),\ldots, {\bf t}(R_m)$, is equal to {\bf t}(${\sf T}(A)$).\end{quotation}

\noindent  Frank remarks that a proof by induction (on the number of rule applications in a proof) of this claim is facilitated by noting that for a $k$-premise rule $R_j$, and formulas $\phi_1,\ldots,\phi_k, \psi$:\begin{center}
${\bf t}(R_j) \, = \, \{\langle {\bf t}(\phi_1),\ldots,{\bf t}(\phi_k),{\bf t}(\psi)\rangle \,\vert\,\langle \phi_1,\ldots \phi_n, \psi\rangle \in R_j\}$\end{center}
though this is best taken, not as a comment about the translation of rules -- an otherwise unexplained notion -- but as a definition of what it is to apply {\bf t} to rules (formulated for $\mathcal{L}$), in terms of the initially specified {\bf t} as applied to formulas (of $\mathcal{L}$), identifying a $k$-premise rule with the set of  all tuples $\langle \phi_1,\ldots \phi_n, \psi\rangle$ that constitute an application of the rule, the $\phi_i$s being premises and $\psi$ the conclusion. Shapiro \cite[p.\,249]{Shapiro},  points out that this definition does not in fact capture what most people have had in mind in translating rules, because it ignores the role of schematic letters in their formulation which, when interpreted over $\mathcal{L}'$, are taken as ranging over all formulas of $\mathcal{L}'$ and not just those of the form ${\bf t}(\phi)$ for some formula $\phi$ of $\mathcal{L}$\footnote{This point of Shapiro's suggests that instead of identifying a rule with the set of its applications in a particular language, we think of it as mapping any language equipped with the logical vocabulary governed by the rule to the relevant application-set in that language. This is the policy urged in 4.33 of \cite{Humberstone3}.}. This consideration also applies to 0-premise rules (axiom schemata). This makes such a rule a $(k + 1)$-ary relation between formulas -- and not necessarily a functional such relation.\footnote{We include this remark because at the top of the second page of his note, Frank adds that ${\bf t}(R_j)(y_1,\ldots,y_k) = {\bf t}(x) = {\bf t}(R_j)({\bf t}(y_1),\ldots,{\bf t}(y_k))$, in which he writes $y_1,\ldots,y_k,x$ for what appear as $\phi_1,\ldots,\phi_k,\psi$ above -- and incidentally writes `${\rm R}J\langle y_1,\ldots,y_k \rangle$' (etc.) for what  was just  quoted as `$R_j(y_1,\ldots,y_k)$'. There is no reason to restrict attention to such `functional' rules, however. For example, one could have a rule in a Hilbert-style/axiomatic setting like the natural deduction rule of $\lor$-introduction of a second disjunct taking us for any formulas $\phi, \psi$ from the premise $\phi$ to the conclusion $\phi \lor \psi$, which is the binary relation comprising all pairs $\langle \phi, \phi \lor \psi\rangle$ for $\phi, \psi$ in the language concerned, so the the conclusion is not uniquely determined by the premise.} There is good news and bad news. The good news is that this issue about the functionality of the rules does not affect the inductive argument Frank has in mind. The bad news is that the induction fails for a subtle reason explained in Shapiro \cite[p.\,348]{Shapiro} with the aid of a simple counterexample: we need to impose the condition that the translation itself should be injective. Frank's Claim -- setting to one side the need for a corrected formulation -- has yet to be brought to bear on the case of Halmos's incomplete axiomatization, however, so let us see how this is attempted in Frank's discussion.

We shall quote the passage in question verbatim, except for symbolizing negation and conjunction by $\neg$ and $\land$; this includes reproducing the phrase `the domain {\sf D}', even though this `{\sf D}' appears nowhere else in the paper (and the syntax is obscure: the domain {\sf D} is not \textit{what}?):
\begin{quote}\small Thus, if {\bf t} is not an onto-mapping from $\mathcal{L}$ to $\mathcal{L}'$, (as the domain {\sf D} is not, having as its range in the language containing $\neg$ and $\land$ only sentences beginning with $\neg$), a complete axiomatization in $\mathcal{L}$ will result in an incomplete 
one in $\mathcal{L}'$.\end{quote}
Thus it seems that Frank has switched to using `{\bf t}' not as a variable for discussing translations in general, but to allude to the specific {\bf t} in play in Halmos's discussion. In mentioning the case as one in which {\bf t} does not map $\mathcal{L}$ onto (but only into) $\mathcal{L}'$, Frank has usefully made explicit the fact that {\bf t} is a mapping -- as well as raising the question of how its lack of surjectivity might bear on the current issue, something we shall put on hold for a moment -- and has also explicitly identified its domain and codomain.\footnote{\label{t*} Ideally, this second use of `{\bf t}' for a mapping from $\mathcal{L}$-rules to $\mathcal{L}'$-rules would be notationally distinguished -- as {\bf t}* --   say, from the formula-to-formula map {\bf t} it is induced by, though here we have been following Frank in suppressing the distinction (omitting the  `*', on the suggestion just mooted). In Section \ref{Intro} we noted that since we were working with logics sets of formulas we would not be using the apparatus of consequence relations and could ignore everything to the left of the `$\vdash$' in our opening paragraph. It might seem that in discussing Frank's translations of axiomatizations the transition from $A$ to ${\sf T}(A)$ -- the closure of the set of axioms under the rules -- which he wants his translations to preserve, precisely reinstates consequence relation (or more accurately, the corresponding consequence operation) in play in our opening paragraph in Section \ref{Intro}. This is not straightforwardly so, however, because the notation $A$ (which appears in the passage quoted from Frank, though without being properly introduced there) stands for a set containing not only axioms but also rules. (Compare the `tuple systems' of \cite[subsec. 0.26]{Humberstone3}.) If we instead think of $A$ as the set of axioms and build the use of the rules into the ``{\sf T}'' part of the ``${\sf T}(A)$'' notation, then we will have a genuine consequence operation, though typically it will not correspond to the consequence relation most readily associated with the logic in question, because of the \textit{rules of proof} vs \textit{rules of inference} contrast. (See the index entry under that heading in \cite{Humberstone5} for discussion and references.)} 

In view of this, one might ask why Frank writes (as quoted above) that the range of {\bf t} comprises only formulas (or, as he says, sentences) beginning with $\neg$. What, in particular, is ${\bf t}(p_i)$ supposed to be, for a sentence letter/propositional variable $p_i$, in the case of the translations {\bf t} currently under consideration? In the literature on translations embedding one logic in another (whether the logics are taken as sets of theorems or as consequence relations or \ldots) as opposed to the associated translations -- {\bf t}* in note \ref{t*} --  from one proof system to another (whether, as for the current discussion, a Hilbert-style  system, or a natural deduction system or a Gentzen system), what get called \textit{definitional} translations have to satisfy two conditions: they have to be \textit{variable-fixed}, i.e., satisfy ${\bf t}(p_i) = p_i$ for all propositional variables $p_i$ (where we here restrict attention  restrict attention to sentential languages for simplicity, and assume that all are equipped with the same countable supply of such variables), as well as being \textit{compositional} (sometimes called `schematic') in the sense that for every primitive $n$-ary connective $\#$ of $\mathcal{L}$ there is a formula $\phi(p_1,\ldots,p_n) \in \mathcal{L}'$ containing only the sentence letters displayed, for which we have:\begin{center}
for all $\psi_1,\ldots, \psi_n \in \mathcal{L}$, ${\bf t}\big(\#(\psi_1,\ldots, \psi_n)\big) = \phi\big({\bf t}(\psi_1),\ldots,{\bf t}(\psi_n)\big)$.\end{center}
One can think of $\phi(p_1,\ldots,p_n)$ as putatively defining the $\#$ of $\mathcal{L}$ in the language $\mathcal{L}'$, which is why these are called {\it definitional} translations; in the case of Halmos $\mathcal{L}$ has connectives $\neg$ and $\lor$ and $\mathcal{L}'$ has connectives $\neg$ and $\wedge$, and the inductive definition of the {\bf t} involves:
\begin{itemize}
\setlength\itemsep{.3em}
\item ${\bf t}(p_i) = p_i$
\item ${\bf t}(\neg \phi) = \neg({\bf t}(\phi))$
\item ${\bf t}(\phi \lor \psi) = \neg(\neg {\bf t}(\phi) \land \neg{\bf t}(\psi))$
\end{itemize}
In view of these considerations, we are inclined to think of Frank's comment about the range of {\bf t} comprising only formulas of the form $\neg \phi$ as an oversight -- but in fact a revealing one if the charge in the following paragraph of a confusion between $\mathcal{L}$ and $\mathcal{L}'$ as, on the one hand languages, and on the other hand, logics formulated in those languages, is correct: since certainly in the setting of Halmos's and Hi{\. z}'s discussion, the $p_i$ are not going to show up as \textit{provable} formulas in the range of {\bf t}. Note, incidentally, that the issue raised by Shapiro  concerning injectivity is not addressed by insisting on definitional translations, since we could easily have such a translation that is not injective. One way would be to choose the same `defining' formula $\phi$ for two connectives of the same arity. But we return from injectivity to the matter of surjectivity.\footnote{\label{constants_note}The example, not summarized above, of a non-injective translation on the first page of Shapiro's paper can be presented as a definitional translation subject to the convention that the propositional variables come in countable supply by treating his $a, b, c$ in $\mathcal{L}$ and $A, B$ in $\mathcal{L}_2$ as nullary connectives (sentential constants); the simultaneous presence in the languages of the $p_i$ ($i \in \omega$) does not affect the example.}

What is not obvious is why a failure of surjectivity on {\bf t}'s part should occasion a failure of completeness for the target logic -- axiomatized by applying {\bf t} to a complete axiomatization of the source logic. Recall that  $\mathcal{L}$ and $\mathcal{L}'$ are not the source and target logics involved here, but rather just the languages of these logics. So one cannot immediately reason: 
\begin{quote}\small Suppose that $\phi' \in \mathcal{L}'$ is not ${\bf t}(\phi)$ for any $\phi \in \mathcal{L}$. In that case the target logic must be incomplete, since ${\bf t}(\phi)$ is cannot be provable in it, by the main observation above (beginning ``If {\sf T}($A$) is the closure\ldots'').\end{quote} 
This would not work, because the notion of completeness in play here is most evidently explicated in semantic terms: we are trying to axiomatize the classically valid (`tautologous') formulas in the language $\mathcal{L}$, so what we need is not just some formula or other of $\mathcal{L}'$ that is not ${\bf t}(\phi)$ for any formula $\phi \in \mathcal{L}$ -- all that a failure of surjectivity asserts -- but that we have some \textit{valid} formula $\phi'$ of $\mathcal{L}'$ that is not ${\bf t}(\phi)$ for any (here redundantly: valid) formula $\phi \in \mathcal{L}$. With this apparent strengthening of (a correctly formulated version of) Frank's Claim, we could proceed to the desired incompleteness conclusion, since such a $\phi'$ would then be a witness to the incompleteness of the axiomatization obtained by applying {\bf t} to the initially given complete axiomatization of the valid formulas of $\mathcal{L}$.

To see how this gap arises between Frank's claim to have provided a simpler alternative to Hi{\. z}'s conclusion and the explicit justification he provides for that claim, it is necessary to inquire more deeply into what unstated conditions might be in play concerning the notion of a translation beyond it being a mapping from one language to another (continuing here, for simplicity, to identify a language with its set of formulas). In many cases, once attention is restricted to definitional translations, the absence of a $\phi'$ which is not (classically) valid can be exploited to find a related formula of $\mathcal{L}'$ which is valid. For example, take $\mathcal{L}$ and 
$\mathcal{L}'$ to be the languages of classical propositional logic with primitives $\{\lor, \neg\}$ in the former case and $\{\lor, \neg, \bot\}$ in the latter, with {\bf t} the identity translation: a degenerate case of a definitional translation:
\begin{itemize}
\setlength\itemsep{.3em}
\item ${\bf t}(p_i) = p_i$
\item ${\bf t}(\neg \phi) = \neg({\bf t}(\phi))$
\item ${\bf t}(\phi \lor \psi) = {\bf t}(\phi) \lor {\bf t}(\psi))$
\end{itemize}
Evidently $\bot$ is not in the range of {\bf t}, and while this is not an immediate threat to the completeness of the result of applying {\bf t} to any complete axiomatization of the valid formulas of $\mathcal{L}'$ since $\bot$ is not such a formula, we note that this means that we also have, for example, $\neg p \lor (p \lor \bot)$ as a valid $\mathcal{L}'$-formula that is missing from the range of {\bf t}, occasioning incompleteness. What if the $\phi'$ not in the range of {\bf t} has no such distinctive logical behaviour -- for example is a nullary connective (cf.\ note \ref{constants_note}) like $\bot$ as it behaves in Johansson's Minimal Logic, while $\neg$ and $\lor$ continue to behave classically? Then, in place of the $\phi'$ in question we could use the disjunction $\phi' \lor \neg \phi'$ as the missing valid formula. Since we are focusing on Hilbert-style systems here -- and not including as logics `purely inferential' or atheorematic consequence relations (such as the conjunction--disjunction fragment of classical logic),  the logics concerned need to have at least one provable formula. So we simply make judicious substitutions for any propositional variable occurring in such a formula. If we were concerned with intuitionistic propositional logic, for instance, we could use $\phi' \to \phi'$ or $\neg\neg(\phi' \lor \neg \phi')$ to play this role. 

This does not cover all eventualities, however. Saying that we require at least one provable formula does not guarantee that there is such a formula in which there occur propositional variables for which the envisaged substitution of $\phi'$ for some $p_i$ can be made. So again making use of constants (and once more the example mentioned in note \ref{constants_note} is relevant), let $\mathcal{L}$ and $\mathcal{L}'$ have for their logical vocabulary $\{\top\}$ and $\{\top, \bot\}$ and let {\bf t} be the identity map. The (classically) valid formulas of $\mathcal{L}$ are one in number: {$\top$}, so we can axiomatize the logic with that formula as our sole axiom, and no rules at all. The translation under {\bf t} of this axiomatization is the identity and so suffers from no incompleteness, even though Frank's sufficient condition for producing incomplete translations is satisfied: {\bf t} is not a surjective translation (though we cannot construct a valid formula to exploit this). Of course, this example differs from those considered earlier among fragments of classical logic in that we are not dealing with a functionally complete fragment (to say the least). But the role of functional completeness is not particularly emphasized in Frank's discussion and its role has not been investigated here either. 

\section{Incompleteness}\label{Incompleteness}
\subsection{The case of one-variable classical predicate calculus}\label{one}
Let us look now at the simple example of the monadic one-variable classical predicate calculus studied by Henkin in \cite[p.  6]{Henkin}; the reason for beginning with the one-variable fragment is explained below (in the paragraph following Remark \ref{alt}). We shall show that  this logic with the primitives $[\neg, \rightarrow, \exists]$ cannot, in general, be axiomatized via translation in terms of $[\neg,  \rightarrow, \forall]$ (our choice of $\neg, \rightarrow$ is just for simplicity, the same result holds for $\neg, \rightarrow, \wedge, \vee$). As Henkin presented his one-variable system, one allows only unary vocabularies and one individual variable $x$. This logic is axiomatizable by the usual presentations of  predicate calculus (such as that in \cite{Mendelson}).

Consider then the following standard\footnote{See \cite{Bell&Slomson} where the authors also add axiom schemata for the remaining connectives. The results in what follows all remain true if we add those further  postulates as well.} attempt at axiomatization via translation where we set $\forall x \phi(x) \stackrel{\text{def}}{=}\neg \exists x \neg \phi(x)$:

\begin{itemize}
\item[] {\sc Axiom schemata}
\item[(A1)] $\phi \rightarrow (\psi \rightarrow \phi)$
\item[(A2)] $(\phi \rightarrow (\psi \rightarrow \chi)) \rightarrow ((\phi \rightarrow \psi) \rightarrow (\phi \rightarrow \chi))$
\item[(A3)] $(\neg \phi \rightarrow \neg \psi) \rightarrow (\psi \rightarrow \phi)$
\item[(A4)] $\forall x \phi(x) \rightarrow \phi(x)$
\item[(A5)] $\forall x (\phi \rightarrow \psi (x)) \rightarrow (\phi \rightarrow \forall x \psi(x))$ where $x$ is not free in $\phi$.

\end{itemize}

\begin{itemize}
\item[] {\sc Rules}
\item[(R1)]  \emph{Modus Ponens}:  
\[  \RULE{\phi \rightarrow \psi  \qquad \phi}{\psi}\]
\item[(R2)] \emph{Generalization}: \[  \RULE{\phi}{\forall x \phi}\]
\end{itemize}

When these rules and axiom schemata are given for predicate calculus with infinitely many variables, $x$ simply serves as a place holder for any of the variables in the official list $x_1, x_2, x_3, \dots$
Our strategy consists in adapting the argument for modal logic from \cite[Corollary 2.2]{Humberstone}  to the present setting. Roughly, we shall be  interpreting the logical primitives of one-variable predicate calculus by    \emph{operations} on domains (in the sense of \cite{McGee} and more recently \cite[Def. 6.1]{Casanovas} or \cite[\S 2.1]{Kennedy}) that diverge from the standard in order to show the unprovability of the validity $\exists x \phi (x) \rightarrow \exists x \neg \neg\phi (x)$. This method was in fact, introduced already by Henkin in  \cite[p. 21]{Henkin} using the term `generalized models'\footnote{Not to be confused with Henkin’s ‘general models’ for second order logic, though the two ideas have something in common, each giving an unintended interpretation to some of the logical vocabulary.} to show the incompleteness of certain finite-variable logics with respect to obvious attempts at axiomatization. This kind of argument is a generalization to the first-order level of the typical proofs of independence of different axioms for propositional calculus.

\begin{Rmk}\label{logop}\emph{ To illustrate the notion of a logical operation on a domain $A$ we shall refer the reader to \cite[Example 3]{Kennedy}.  For example, in the context of monadic logic, if we have  $X_0, X_1 \subseteq A$, then conjunction is the operation $f_\wedge (X_0, X_1) \mapsto X_0 \cap X_1$, negation is the operation $f_\neg (X_0) \mapsto A \setminus X_0$ and the existential quantifier $\exists $ is the operation 
\begin{center}$
    f_\exists (X_0) =
    \begin{cases*}
    A& if $X_0 \neq \emptyset$ \\
  \emptyset      & otherwise
    \end{cases*}$
    \end{center}
}\hspace*\fill$\blacktriangleleft$
\end{Rmk}

\
\

Using the  notion of an operation on models as exemplified in Remark \ref{logop}, one can easily obtain a satisfaction relation. Then we may take the \emph{value} of a formula $\phi$  on a model $\mathfrak{A}$, denoted $\mathfrak{A}(\phi)$, to be the set $\{ a \in A \mid \mathfrak{A} \models \phi[a]\}$ and it can be computed recursively using the  operations corresponding to the primitives of the logic. For a sentence, $\mathfrak{A}(\phi)$ is always $A$ or $\emptyset$.
In this sense, a first-order sentence might be said to be \emph{logically valid} if its value is the whole domain in every model.  Once we reinterpret the logical primitives in new ways, what $\mathfrak{A}(\phi)$ ends up being will change.

We show the independence of $\exists x\phi(x) \to \exists x \neg\neg\phi(x)$ in the course of the proof of Proposition \ref{monadic}, though readers for whom the `logical operations' approach summarised in Remark 2 may prefer to glance first at Remark 3 below to see the proof would go without explicitly invoking that apparatus.

\begin{Pro}\label{monadic} Let $\vdash$ stand for provability in the axiomatization of one-variable logic given above. Consider a vocabulary  $\tau= \{P\}$  where $P$ is a unary predicate letter. There is a two-element model $\mathfrak{A}$, with domain $A=\{u, v\}$, and an interpretation of the primitives $[\neg, \rightarrow, \exists]$ such that $\vdash \phi$ only if  either $\mathfrak{A}(\phi) = A$ or $\mathfrak{A}(\phi) = \{u\}$. Moreover, we have that  $$\mathfrak{A}(\exists x P (x) \rightarrow \exists x \neg \neg P (x)) = \emptyset.$$

\end{Pro}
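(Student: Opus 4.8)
The plan is to build a concrete ``deviant'' interpretation of the primitives $[\neg,\rightarrow,\exists]$ on the two-element domain $A=\{u,v\}$ and verify by induction on proofs that every provable formula gets a value lying in the sublattice $\{A,\{u\}\}$ of $\mathcal{P}(A)$, while the target formula $\exists x P(x)\rightarrow\exists x\neg\neg P(x)$ escapes it. Since $\{A,\{u\}\}$ is closed under the \emph{standard} conjunction (intersection) and implication and under the standard \emph{universal} quantifier --- indeed $\{A,\{u\}\}$ is the image of the Heyting algebra structure one gets by thinking of $v$ as ``false'' and $u$ as ``true'' in a two-valued way --- the natural choice is to interpret $\rightarrow$ standardly (so $f_\rightarrow(X_0,X_1)=(A\setminus X_0)\cup X_1$) and to deviate only in the reading of $\neg$ and $\exists$. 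Concretely I would set $f_\neg(X_0)=A\setminus X_0$ unchanged, but make $\exists$ ``blind'' to the point $v$: put $f_\exists(X_0)=A$ if $u\in X_0$ and $f_\exists(X_0)=\emptyset$ otherwise (equivalently, $f_\exists(X_0)$ depends only on $X_0\cap\{u\}$). With $\forall$ \emph{defined} as $\neg\exists\neg$, one computes $f_\forall(X_0)=A$ if $u\in X_0$ and $\emptyset$ otherwise as well; in particular $f_\forall(X_0)$ and $f_\exists(X_0)$ coincide, which is exactly the collapse that will make the axioms (A4), (A5) and the rule (R2) come out safe while breaking the offending schema. Since the only predicate letter is $P$, I would finally pick the interpretation of $P$ so that $\mathfrak{A}(P(x))=\{v\}$, i.e.\ $P$ holds of $v$ but not of $u$.

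With these choices the verification is routine. First, every formula of the language with free variable among $\{x\}$ receives a value in $\mathcal{P}(A)$, and I claim that every \emph{provable} $\phi$ in fact receives a value in $S:=\{A,\{u\}\}$. The base cases are the axiom schemata (A1)--(A5): for (A1)--(A3) this is just the observation that $S$ is closed under $f_\rightarrow$ and $f_\neg$ and contains the values of all classical tautologies in two-valued semantics with ``true'' $=u$; more carefully, one notes that for \emph{any} assignment of values in $\mathcal{P}(A)$ to the schematic letters, a propositional tautology built with $\neg,\rightarrow$ evaluates into $S$ --- this is because $f_\rightarrow$ and $f_\neg$ act coordinatewise on the two ``bits'' $X\cap\{u\}$ and $X\cap\{v\}$, each bit carrying a Boolean algebra, so a tautology forces the $u$-bit to be ``on'' regardless of the $v$-bit, and one checks the $v$-bit is likewise forced on by a tautology; hence the value is all of $A$, not merely something in $S$. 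For (A4), $\forall x\phi(x)\rightarrow\phi(x)$: here $f_\forall(\mathfrak{A}(\phi))$ is $A$ or $\emptyset$, and $f_\rightarrow(f_\forall(X),X)=A$ whenever $f_\forall(X)=\emptyset$, while if $f_\forall(X)=A$ then $u\in X$ so $f_\rightarrow(A,X)=X\supseteq\{u\}$, giving a value in $S$. For (A5) one uses that $x$ is not free in $\phi$, so $\mathfrak{A}(\phi)\in\{A,\emptyset\}$ is constant; a short case analysis on whether that constant is $A$ or $\emptyset$ and on $f_\forall(\mathfrak{A}(\psi(x)))$ shows the whole conditional evaluates to $A$. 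For the inductive step, modus ponens preserves membership in $S$ because if $\mathfrak{A}(\phi\rightarrow\psi)\in S$ and $\mathfrak{A}(\phi)\in S$ with $\mathfrak{A}(\phi)$ also needing to be $A$ for the only ``dangerous'' case --- actually the cleanest route is to note $\mathfrak{A}(\phi)\supseteq\{u\}$ always holds for provable $\phi$, and $\mathfrak{A}(\phi\rightarrow\psi)\supseteq\{u\}$ together with $u\in\mathfrak{A}(\phi)$ forces $u\in\mathfrak{A}(\psi)$, so $\mathfrak{A}(\psi)\in S$; and generalization (R2) preserves $S$ since $f_\forall(X)\in\{A,\emptyset\}$ and equals $A$ exactly when $u\in X$, which holds when $X\in S$, so $\mathfrak{A}(\forall x\phi)=A\in S$.

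Finally, the separating computation: $\mathfrak{A}(P(x))=\{v\}$, so $\mathfrak{A}(\exists x P(x))=f_\exists(\{v\})=\emptyset$ since $u\notin\{v\}$; wait --- that would make the antecedent empty and the whole implication $A$. So the interpretation of $P$ must be chosen the other way, $\mathfrak{A}(P(x))=\{u\}$, and more to the point the \emph{deviation must sit in $\neg$ rather than $\exists$}: the schema that fails is $\exists x P\rightarrow\exists x\neg\neg P$, so I want $\exists x P$ to evaluate to $A$ but $\exists x\neg\neg P$ to evaluate to $\emptyset$ or $\{v\}$, which requires $f_\neg f_\neg$ not to be the identity. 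The correct design, then, is: keep $f_\rightarrow,f_\exists$ standard (so $f_\exists(X)=A$ iff $X\neq\emptyset$, matching Remark~\ref{logop}), and deviate in negation, e.g.\ $f_\neg(X)=A\setminus X$ if $v\in X$ and $f_\neg(X)=\{v\}$ otherwise (so $f_\neg$ ``leaks'' a $v$); then $f_\neg(\{u\})=\{v\}$ and $f_\neg(\{v\})=\{u\}$ and $f_\neg(A)=\emptyset$, $f_\neg(\emptyset)=\{v\}$, whence $f_\neg f_\neg(\{u\})=\{u\}$ but $f_\neg f_\neg(\{v\})=f_\neg(\{u\})=\{v\}$ --- still identity on singletons, no good. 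The real obstacle, and the main work of the proof, is precisely to pin down an interpretation of $\neg$ (or of $\exists$) on $\{u,v\}$ that (i) validates (A1)--(A5), (R1), (R2) in the sense that all theorems land in a proper ``safe'' set $S$, yet (ii) makes $f$ applied to $\exists x P$ versus $\exists x\neg\neg P$ diverge, so that $\mathfrak{A}(\exists x P\rightarrow\exists x\neg\neg P)=\emptyset$; I expect this is done by taking $\neg$ to act as classical complement on the ``closed'' sets $\{A,\emptyset,\{u\}\}$ but send $\{v\}$ somewhere that, after a second negation and then $\exists$, produces $\emptyset$ --- concretely one wants $\mathfrak{A}(\neg\neg P(x))=\emptyset$ while $\mathfrak{A}(P(x))=\{u\}$, i.e.\ $f_\neg(\{u\})$ is some $Y$ with $f_\neg(Y)=\emptyset$, forcing $f_\neg(\{u\})=A$; so set $f_\neg(\{u\})=A$, $f_\neg(A)=\emptyset$, $f_\neg(\emptyset)=A$, $f_\neg(\{v\})=\{v\}$, and then re-run the axiom check with this $f_\neg$ against $S=\{A,\{u\}\}$, with the delicate point being axiom (A3), the contraposition axiom, which is the one place where a non-classical $\neg$ is most likely to misbehave and must be verified by brute-force four-by-four case analysis.
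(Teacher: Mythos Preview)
Your overall plan---a two-element domain $A=\{u,v\}$, a deviant interpretation of the primitives, and an induction on proofs showing that theorems land in the safe set $S=\{A,\{u\}\}$---is exactly the paper's strategy, and you are right both that the deviation should sit in $\neg$ (with $\rightarrow$ and $\exists$ kept standard) and that (A3) is the danger point. But the specific $f_\neg$ you finally settle on, namely $f_\neg(A)=\emptyset$, $f_\neg(\{u\})=A$, $f_\neg(\{v\})=\{v\}$, $f_\neg(\emptyset)=A$, does \emph{not} validate (A3). With $\mathfrak{A}(P(x))=\{u\}$ one computes $\mathfrak{A}(\neg\neg P(x))=\emptyset$ and hence $\mathfrak{A}\big(P(x)\rightarrow\neg\neg P(x)\big)=\{v\}$; taking $\phi$ to be this last formula (value $\{v\}$) and $\psi=P(x)$ (value $\{u\}$) in (A3), one gets $f_\neg(\{v\})=\{v\}$ and $f_\neg(\{u\})=A$, so $\neg\phi\rightarrow\neg\psi$ has value $A$ while $\psi\rightarrow\phi$ has value $\{v\}$, and the whole instance of (A3) evaluates to $\{v\}\notin S$. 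The soundness half of your argument therefore collapses precisely at the point you flagged but did not check.

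The paper's choice is a different negation: $f^*_\neg(X)=\{u\}$ if $u\notin X$ and $f^*_\neg(X)=\emptyset$ if $u\in X$ (equivalently, $\neg\phi$ is satisfied only at the distinguished element $u$, and only when $\phi$ fails at $u$). The range of $f^*_\neg$ is then $\{\emptyset,\{u\}\}$, and this is what rescues (A3) by a short case split: if $u\in\mathfrak{A}(\phi)$ the consequent $\psi\rightarrow\phi$ already contains $u$; if $u\notin\mathfrak{A}(\phi)$ but $u\in\mathfrak{A}(\psi)$ the antecedent $\neg\phi\rightarrow\neg\psi$ equals $f^*_\rightarrow(\{u\},\emptyset)=\{v\}$, so the outer implication contains $u$; the remaining case is trivial. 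With this $f^*_\neg$ one takes $\mathfrak{A}(P(x))=\{v\}$ (not $\{u\}$---note that with the \emph{standard} $\exists$ this still gives $\mathfrak{A}(\exists xP(x))=A$), obtaining $f^*_\neg(\{v\})=\{u\}$, $f^*_\neg(\{u\})=\emptyset$, $f^*_\exists(\emptyset)=\emptyset$, and hence $\mathfrak{A}\big(\exists xP(x)\rightarrow\exists x\neg\neg P(x)\big)=\emptyset$ as required.
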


\begin{proof} Given the domain $A= \{u, v\}$, we proceed to re-interpret the operations corresponding to $\rightarrow, \neg$ and $\exists$ in this domain:

\
\FloatBarrier

\begin{table}[h]
\begin{tabular}{l||cccc}
$f^*_\rightarrow (X_0, X_1)$ &  $X_1 = A$   &   $X_1 = \{u\}$   &  $X_1 = \{v\}$          &   $X_1 = \emptyset$     \\ \hline\hline
                     $X_0 = A$         & $A$ & $\{u\}$ & $\{v\}$ &  $\emptyset$ \\
                  $X_0 = \{u\}$            &  $A$ & $A$ & $\{v\}$  & $\{v\}$  \\
                $X_0 = \{v\}$             &$A$  & $\{u\}$ &  $A$ & $\{u\}$ \\
                    $X_0 = \emptyset$         &  $A$ &  $A$ &  $A$  & $A$
\end{tabular}
\end{table}

    \FloatBarrier
    
\begin{minipage}{.45\linewidth}
\[
    f^*_\exists (X_0) =
    \begin{cases*}
    A& if $X_0 = A$ \\
A   &  if $X_0 = \{u\}$  \\
    A   &  if $X_0 = \{v\}$\\
      \emptyset      &  if $X_0 = \emptyset$    \end{cases*}\]
\end{minipage}  
\begin{minipage}{.45\linewidth}
\[
    f^*_\neg (X_0) =
    \begin{cases*}
  \emptyset   & if $X_0 = A$ \\
\emptyset     &  if $X_0 = \{u\}$  \\
  \{u\}  &  if  $X_0 = \{v\}$\\
      \{u\}    &  if $X_0 = \emptyset$    \end{cases*}    \]
\end{minipage}   

\

Under our earlier definition of $\forall$, then the operation for this defined symbol is:

  \begin{center}
\[
  f^*_\neg(  f^*_\exists (f^*_\neg (X_0))) = f^*_\forall (X_0) =
    \begin{cases*}
    \{u\}& if $X_0 = A$ \\
\{u\}   &  if $X_0 = \{u\}$  \\
    \emptyset  &  if $X_0 = \{v\}$\\
      \emptyset      &  if $X_0 = \emptyset$    \end{cases*}\]
\end{center}   
    
    \
    
     Take the model $\mathfrak{A}$ that results from the domain $\{u, v\}$ and letting   $\mathfrak{A}(P (x))=\{v\}$.
     
     First, one can see that 
    the logically valid (in the standard sense) sentence $\exists x P (x) \rightarrow \exists x \neg \neg P (x)$ takes value $\emptyset$ in $\mathfrak{A}$, which implies that it is not derivable from the axiomatization we just considered. To see this, it suffices to  check the following equalities: 

    $$f^*_\rightarrow ( f^*_\exists (\{v\}), f^*_\exists ( f^*_\neg (f^*_\neg(\{v\})))) = f^*_\rightarrow ( A, f^*_\exists ( f^*_\neg (  \{u\} ))) = f^*_\rightarrow ( A, f^*_\exists (      \emptyset  )) =  f^*_\rightarrow ( A,      \emptyset  ) =  \emptyset.$$

      On the other hand,  every axiom of the axiomatization via translation in terms of $[\neg,  \rightarrow, \forall]$ takes as value either $\{u, v\}$ or $\{u\}$. We check this for A4 and A5 and leave the rest as exercises for the reader.  For A4, first observe 
    
    $$ f^*_\forall(\mathfrak{A}(\phi)) =  \begin{cases*}
    \{u\}& if $\mathfrak{A}(\phi) = A$ \\
\{u\}   &  if $\mathfrak{A}(\phi) = \{u\}$  \\
    \emptyset  &  if $\mathfrak{A}(\phi) = \{v\}$\\
      \emptyset      &  if $\mathfrak{A}(\phi) = \emptyset$    \end{cases*}$$
      which means that we have the following  table for $f^*_\rightarrow (  f^*_\forall(\mathfrak{A}(\phi)), \mathfrak{A}(\phi))$ (and, hence the operation always takes $A$ as  value):
      
      \FloatBarrier

\begin{table}[h]
\begin{tabular}{l||cccc}
$f^*_\rightarrow (  f^*_\forall(\mathfrak{A}(\phi)), \mathfrak{A}(\phi))$ &  $\mathfrak{A}(\phi) = A$   &   $\mathfrak{A}(\phi) = \{u\}$   &  $\mathfrak{A}(\phi)= \{v\}$          &   $\mathfrak{A}(\phi) = \emptyset$     \\ \hline \hline
                  $f^*_\forall(\mathfrak{A}(\phi)) = \{u\}$            &  $A$ & $A$ &  &   \\
                    $f^*_\forall(\mathfrak{A}(\phi)) = \emptyset$         &  &  &  $A$  & $A$
\end{tabular}
\end{table}
      
      \FloatBarrier
For A5, we must compute the values of $$f^*_\rightarrow (  f^*_\forall(f^*_\rightarrow(\mathfrak{A}(\phi),  \mathfrak{A}(\psi(x)))), f^*_\rightarrow (\mathfrak{A}(\phi),  f^*_\forall (\mathfrak{A}(\psi(x))) )).$$  First observe that given that $\phi$ is a sentence (so this sentence only takes one of the values $A$ or $\emptyset$), the table for $f^*_\rightarrow(\mathfrak{A}(\phi),  \mathfrak{A}(\psi(x)))$ gets simplified:


\begin{table}[h]
\begin{tabular}{l||cccc}
$f^*_\rightarrow(\mathfrak{A}(\phi),  \mathfrak{A}(\psi(x)))$ &  $\mathfrak{A}(\psi(x)) = A$   &   $\mathfrak{A}(\psi(x)) = \{u\}$   &  $\mathfrak{A}(\psi(x)) = \{v\}$          &   $\mathfrak{A}(\psi(x)) = \emptyset$     \\ \hline\hline
                     $\mathfrak{A}(\phi) = A$         & $A$ & $\{u\}$ & $\{v\}$ &  $\emptyset$ \\
                    $\mathfrak{A}(\phi) = \emptyset$         &  $A$ &  $A$ &  $A$  & $A$
\end{tabular}
\end{table}
Then   
  $$ f^*_\forall(f^*_\rightarrow(\mathfrak{A}(\phi),  \mathfrak{A}(\psi(x)))) =  \begin{cases*}
    \{u\}& if $f^*_\rightarrow(\mathfrak{A}(\phi),  \mathfrak{A}(\psi(x)))= A$ \\
\{u\}   &  if $f^*_\rightarrow(\mathfrak{A}(\phi),  \mathfrak{A}(\psi(x))) = \{u\}$  \\
    \emptyset  &  if $f^*_\rightarrow(\mathfrak{A}(\phi),  \mathfrak{A}(\psi(x))) = \{v\}$\\
      \emptyset      &  if $f^*_\rightarrow(\mathfrak{A}(\phi),  \mathfrak{A}(\psi(x))) = \emptyset$    \end{cases*}$$
      
      Similarly, we have
      
        $$ f^*_\forall(\mathfrak{A}(\psi(x))) =  \begin{cases*}
    \{u\}& if $\mathfrak{A}(\psi(x)) = A$ \\
\{u\}   &  if $\mathfrak{A}(\psi(x)) = \{u\}$  \\
    \emptyset  &  if $\mathfrak{A}(\psi(x)) = \{v\}$\\
      \emptyset      &  if $\mathfrak{A}(\psi(x)) = \emptyset$    \end{cases*}$$ 
      
      and then 
      
      \FloatBarrier

\begin{table}[h]
\begin{tabular}{l||cccc}
$f^*_\rightarrow (\mathfrak{A}(\phi),  f^*_\forall (\mathfrak{A}(\psi(x))) )$ &     &   $f^*_\forall (\mathfrak{A}(\psi(x)))  = \{u\}$   &     &   $f^*_\forall (\mathfrak{A}(\psi(x)))  = \emptyset$     \\ \hline \hline
                     $\mathfrak{A}(\phi) = A$         &  & $\{u\}$ &  &  $\emptyset$ \\
                    $\mathfrak{A}(\phi) = \emptyset$         &   &  $A$ &   & $A$
\end{tabular}
\end{table}

\FloatBarrier

\

If we let $S_0$ stand for $  f^*_\forall(f^*_\rightarrow(\mathfrak{A}(\phi),  \mathfrak{A}(\psi(x))))$ and $S_1$ for $f^*_\rightarrow (\mathfrak{A}(\phi),  f^*_\forall (\mathfrak{A}(\psi(x))) )$, then we may build the following table:
\FloatBarrier
\begin{table}[h]
\begin{tabular}{l||cccc}
$f^*_\rightarrow ( S_0, S_1)$ &  $ \mathfrak{A}(\psi(x)) = A$   &   $ \mathfrak{A}(\psi(x)) = \{u\}$   &  $ \mathfrak{A}(\psi(x)) = \{v\}$          &   $ \mathfrak{A}(\psi(x)) = \emptyset$     \\ \hline \hline
                     $\mathfrak{A}(\phi) = A$         & $A$ & $\{u\}$  & $A$ &  $A$ \\
                    $\mathfrak{A}(\phi) = \emptyset$         &  $A$ & $A$  & $A$   & $A$
\end{tabular}
\end{table}

\FloatBarrier
Moreover, the rules of inference preserve these values: they take you from premises with values   $A$ or $\{u\}$ to conclusions with those very same values.
    \end{proof}

\begin{Rmk}\label{alt}\emph{ In the proof of Proposition \ref{monadic} we could have presented the new way of interpreting the primitives semantically by means of the following satisfaction relation $ \models^*$ where $\mathfrak{A}$ is taken to be a fixed parameter (namely the structure built in the proof):
\begin{itemize}
\item[] for all $a \in A$,
\item[] $\mathfrak{A} \models^* P(x)[a]$ iff $a \in \mathfrak{A}(P(x))$
\item[] $\mathfrak{A} \models^* \neg \phi [a]$ iff $a=u$ and $\mathfrak{A} \not \models^*  \phi [u]$ (so negation means failure to be satisfied by the distinguished element $u\in A$),
\item[] $\mathfrak{A} \models^*  (\phi  \rightarrow \psi) [a]$ iff $\mathfrak{A}\not \models^*  \phi  [a]$ or $\mathfrak{A} \models^*  \psi  [a]$,
\item[] $\mathfrak{A} \models^* \exists x \phi [a]$ iff $\mathfrak{A}  \models^*  \phi [b]$ for some $b \in A$,
\item[] $\mathfrak{A} \models^* \forall x \phi [a]$ iff  $a=u$ and $\mathfrak{A}  \models^*  \phi [u]$.
\end{itemize}
Then we may say a formula is $true^*$ in $\mathfrak{A}$ if satisfied by the distinguished element $u$. What the argument in Claim \ref{monadic} showed then is that all the axioms of
the calculus are $true^*$ in $\mathfrak{A}$, the  rules preserve $truth^*$ but $\exists x P (x) \rightarrow \exists x \neg \neg P (x)$ is not $true^*$. }
\hspace*\fill$\blacktriangleleft$
\end{Rmk}    
    
      By way of background, let us note that the above proof of Proposition \ref{monadic} is an adaptation of the proof of \cite[Corollary 2.2]{Humberstone}, in the setting of modal logic,\footnote{In the setting of modal logic, work on sensitivity to the choice of primitives began with \cite{Makinson}, though this concerned the structure of the lattice of all modal logics rather than the failure of `axiomatization by translation'. The latter theme is pursued in the case of intuitionistic logic in \cite{Humberstone2}; a combination of vocabulary unfamiliar to a copy editor and inadequate proof reading by the author resulted in the frequent appearance of `intuitionistic(ally)' in this paper as `intuitional(ly)'. Furher illustrations, from modal logic, of losing congruentiality as a result of changing primitives and not making compensatory adjustment can be found under Example 1.3.18 in \cite{Humberstone4}.} but with a semantics borrowing one aspect of the use of non-normal worlds. Usually those are exploited in two ways for non-normal modal logics, one being that the validity of a formula requires only its truth at the normal worlds in all models, with the other being that the normality of a world is an additional necessary condition for the truth of a $\Box$-formula at that world, over and above the truth of of the formula in the scope of the $\Box$ at all accessible worlds. The first of these roles is still played by the normal worlds of \cite{Humberstone}, but the second role is altered so that it is not $\Box$ (or $\Diamond$) formulas that make an additional demand of normality of the points at which they are evaluated, but $\neg$-formulas that require the world in question to be normal (as well as that the formula in the immediate scope of $\neg$ should fail to be true at the point). In addition, the accessibility relation in play is universal, and so does not need to be mentioned. This feature makes monadic one-variable predicate logic the appropriate non-modal analogue for present purposes, a closed monadic formula corresponding to a `fully modalized' modal formula, and the two-world models of \cite{Humberstone} -- with one normal and one non-normal world -- can then interpret any free occurrence of (the sole candidate) variable as picking out the normal element. This normal element  above is referred to as $u$, the non-normal element being $v$, exactly as in the notation of \cite{Humberstone} for the corresponding two worlds. The domain $A$ is there just the universe $\{u, v\}$ of the models under consideration. This set is labelled 1, with $\{u\}, \{v\}$, and $\emptyset$ appearing as 2, 3, and 4 in Figure 1 of \cite[p.399]{Humberstone} being the modal version of of the tables in the proof of Claim \ref{monadic} above, with $\Box$ and $\Diamond$ in place of $\forall$ and $\exists$. Note that all that is required of the nonstandard semantics in such cases is that the axiomatization under discussion should be \textit{sound} w.r.t.\ the notion of validity provided by the semantics, and that the formula whose unprovability is to be shown is invalid. It is not required that the axiomatization be not only sound but complete w.r.t.\ the semantics on offer, though for one reason or another, one may be interested in this. In the modal case, Omori and Skurt \cite{Omori&Skurt} explore the possibility of a semantic characterization of the `failed axiomatization' of the modal logic {\sf K} discussed in \cite{Humberstone}. And, by way of a non-modal example, Shapiro \cite[p.  249, last para.]{Shapiro} provides a semantic description of Crossley's (attempted) axiomatization in \cite[p. 19]{Crossley} of classical predicate logic.
    
    \subsection{The case of classical predicate calculus}
   Next we are going to use the approach in Remark \ref{alt}  to transport the incompleteness  result  to \emph{monadic} logic with infinitely many variables. Everything we do can be adapted to the polyadic case but the monadic case is simply easier to understand.\footnote{For example, one could set $\mathfrak{A}(R(x_1,...,x_m)) = \{\langle a_1,...,a_n\rangle \vert a_1 = ... = a_m = v\}$ and adjust the atomic clause for $\models^*$ appropriately.} This is just what suffices to refute  Bell and Slomson's claim that the axiomatization in \cite{Bell&Slomson} is complete.

  We retain the semantic apparatus of Section \ref{one}, now setting $\mathfrak{A}(P (x_i))=\{v\}$ ($i<\omega$). We start by defining the satisfaction relation $ \models^*$,\footnote{Following, for simplicity, the treatment of  variables and finite sequences of elements in a structure that appears in \cite{Marker}.} this time as follows (where $u$ is  the distinguished element of $ A$): 
    \begin{itemize}
\item[] for any formula $\phi(x_{i_1}, \dots, x_{i_n})$ with free variables among $\{x_{i_1}, \dots, x_{i_n}\}$,  a sequence $a_{i_1}, \dots, a_{i_n}$ of elements from $ A$ (in general with repetitions since $A$ has only two elements),
\item[] if $\phi(x_{i_1}, \dots, x_{i_n}) = P(x_{i_k})$, 
\item[]  $\mathfrak{A} \models^*  \phi[a_{i_k}]$ iff $a_{i_k} \in \mathfrak{A}(P(x_{i_k}))$,
\item[] if $\phi(x_{i_1}, \dots, x_{i_n}) = \neg \psi$, 
\item[]  $\mathfrak{A} \models^* \neg \psi [a_{i_1}, \dots, a_{i_n}]$ iff $a_{i_l}=u \ (1\leq l\leq n)$ and $\mathfrak{A} \not \models^*  \psi [\underbrace{u, \dots, u}_n]$,
\item[] if $\phi(x_{i_1}, \dots, x_{i_n}) =  \psi \rightarrow \chi$, 
\item[]  $\mathfrak{A} \models^*  (\psi  \rightarrow \chi) [a_{i_1}, \dots, a_{i_n}]$ iff $\mathfrak{A} \not \models^*  \psi  [a_{i_1}, \dots, a_{i_n}]$ or $\mathfrak{A} \models^*  \chi  [a_{i_1}, \dots, a_{i_n}]$,
\item[] if $\phi(x_{i_1}, \dots, x_{i_n}) =  \exists x_{j} \psi(x_{i_1}, \dots, x_{i_n},x_{j})$, 
\item[] $\mathfrak{A} \models^* \exists x_{j} \psi [a_{i_1}, \dots, a_{i_n}]$ iff $\mathfrak{A}  \models^*  \psi [a_{i_1}, \dots, a_{i_n}, b]$ for some $b \in A$,
\item[] if $\phi(x_{i_1}, \dots, x_{i_n}) =  \forall x_{j} \psi(x_{i_1}, \dots, x_{i_n}, x_{j}) = \neg \exists x_{j} \neg \psi(x_{i_1}, \dots, x_{i_n}, x_{j})$, 
\item[] $\mathfrak{A} \models^* \forall x_{j} \psi [a_{i_1}, \dots, a_{i_n}]$ iff $\mathfrak{A}  \models^*  \psi [\underbrace{u, \dots, u}_{n+1}]$ and $a_{i_l}=u \ (1\leq l\leq n)$.
\end{itemize}
    Then we may say a formula $\phi(x_1, \dots, x_n)$  is $true^*$ in $\mathfrak{A}$ if satisfied by the sequence $\underbrace{u, \dots, u}_n$ where $u$ is the distinguished element of $A$.

    \begin{Pro}\label{monadic2} Let $\vdash$ stand for provability in the axiomatization given in \emph{\cite{Bell&Slomson}}\footnote{Their axiomatization was roughly copied from \cite{Mendelson} according to Alan Slomson (personal communication).} (and repeated at the start of Section 3.1 above). Consider a vocabulary  $\tau= \{P\}$  where $P$ is a unary predicate. Then every instance of an axiom schema from the system in \emph{\cite{Bell&Slomson}} is  $true^*$ in $\mathfrak{A}$ and the rules  preserve this property whereas the following formula is not: $$\exists x P (x) \rightarrow \exists x \neg \neg P (x).$$ 

\end{Pro}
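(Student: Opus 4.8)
The plan is to run the argument of Proposition~\ref{monadic} --- or, more conveniently, its semantic reformulation in Remark~\ref{alt} --- with finite sequences of elements of $A$ in place of single elements, using the relation $\models^*$ defined immediately above the statement, and with $\mathfrak{A}$ the two-element structure used there (domain $\{u,v\}$, $\mathfrak{A}(P(x_i))=\{v\}$, $u$ distinguished). All the essential content is already present in Proposition~\ref{monadic}; the only genuinely new ingredient is bookkeeping with free variables, which is delicate here precisely because the clauses for $\neg$ and (hence, under $\forall x\phi\stackrel{\text{def}}{=}\neg\exists x\neg\phi$) for $\forall$ are anomalous: they require \emph{every} entry of the argument sequence to be a copy of the distinguished element $u$.

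The one lemma that controls this --- and the main thing that needs care --- is: for every formula $\phi$, every list of variables containing the free variables of $\phi$, every assignment $\vec a$ of elements of $A$ to that list, and every enlargement of the list by further variables all assigned the value $u$, we have $\mathfrak{A}\models^*\phi[\vec a]$ iff $\mathfrak{A}\models^*\phi$ at the enlarged assignment. I would prove this by induction on $\phi$: the atomic and the $\rightarrow$ (and, if included, $\wedge,\vee$) clauses are componentwise and so unaffected by padding with $u$'s; the $\neg$ clause cares only whether all entries are $u$ --- unchanged by inserting further $u$'s --- and whether $\psi$ holds at the all-$u$ sequence, which is handled by the induction hypothesis; and for $\exists x_j\psi$ one applies the induction hypothesis to $\psi$ with the chosen witness $b\in A$ appended. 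Two consequences are all that is needed downstream: (i) at the all-$u$ sequence the clauses for $\neg$ and $\rightarrow$ are the classical truth tables, the clause for $\forall x\psi$ says simply that $\psi$ is $true^*$, and the satisfaction set of any $\forall$-formula is contained in $\{(u,\dots,u)\}$; (ii) one may always harmonise the free-variable list of an axiom instance with those of its subformulas, and the list of a rule's conclusion with those of its premises, by inserting $u$-valued dummies.

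Granting this, the three parts of the statement go through as in Proposition~\ref{monadic}. \emph{Axioms.} Instances of A1--A3 (and of any propositional schemata for $\wedge,\vee$ that Bell and Slomson add, read with the standard componentwise clauses) are $true^*$ because they are classical tautologies in $\neg,\rightarrow,\wedge,\vee$ and, by consequence (i), those connectives obey the classical truth tables at the all-$u$ sequence. For A4, evaluate $\forall x\phi\rightarrow\phi$ at the all-$u$ sequence: if $\forall x\phi$ holds there then $\phi$ holds at that sequence extended by the $x$-slot $=u$, hence $\phi$ is $true^*$ by the lemma, so the implication holds. For A5 (side condition $x\notin\mathrm{FV}(\phi)$), $\forall x(\phi\rightarrow\psi)$ being $true^*$ means $\phi\rightarrow\psi$ holds at the extended all-$u$ sequence, i.e.\ $\phi$ is not $true^*$ (using $x\notin\mathrm{FV}(\phi)$ and the lemma) or $\psi$ holds at that extended sequence; and since $\forall x\psi$ is $true^*$ iff $\psi$ holds at that same extended sequence, this disjunction is exactly the assertion that $\phi\rightarrow\forall x\psi$ is $true^*$. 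The universal and partial closures of axiom instances that Bell and Slomson admit are then handled by iterating the argument for generalization. \emph{Rules.} Modus ponens preserves $true^*$ because $\rightarrow$ is classical at the all-$u$ sequence; generalization preserves $true^*$ because, if $\phi$ is $true^*$, then by the $\forall$-clause $\mathfrak{A}\models^*\forall x\phi$ at the all-$u$ sequence. \emph{The missing validity.} Since $\mathfrak{A}(P(x))=\{v\}\neq\emptyset$, the sentence $\exists xP(x)$ is $true^*$; but $\mathfrak{A}(\neg P(x))=f^*_\neg(\{v\})=\{u\}$, so $\mathfrak{A}(\neg\neg P(x))=f^*_\neg(\{u\})=\emptyset$, and hence no element of $A$ witnesses $\exists x\neg\neg P(x)$, which is therefore not $true^*$; so $\exists xP(x)\rightarrow\exists x\neg\neg P(x)$ is not $true^*$, exactly as computed with single elements in the proof of Proposition~\ref{monadic}. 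The hard part of all this is just the lemma of the second paragraph: the arithmetic with the $f^*$-tables is routine and already done in Proposition~\ref{monadic}, and once the coherence of the $\models^*$-clauses under enlarging the free-variable list --- and in particular the harmlessness of $u$-valued dummies under the anomalous $\neg$ and $\forall$ clauses --- is pinned down, the step from one variable to the full monadic (and then polyadic) calculus adds nothing essential.
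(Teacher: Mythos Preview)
Your proposal is correct and follows the paper's own approach: the paper gives no explicit proof of Proposition~\ref{monadic2}, simply setting up the sequence-based $\models^*$ clauses and leaving the verification to the reader as a transplant of the one-variable argument in Proposition~\ref{monadic} and Remark~\ref{alt}. Your padding lemma (invariance of $\models^*$ under enlarging the free-variable list by $u$-valued dummies) is exactly the bookkeeping step the paper suppresses, and making it explicit is the right way to see that the anomalous $\neg$ and $\forall$ clauses cause no trouble when passing from one variable to many.
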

    
    \begin{Rmk} \emph{The inquisitive reader may then ask where exactly the mistake in the purported completeness proof  in \cite[Thm. 3.5.1]{Bell&Slomson} is. It is in the final step, where they claim that the equivalence class of formulas provably equivalent to $\exists v_n \psi(v_0/x_0, \dots, v_{n-1}/x_{n-1}, v_n)$ coincides with that of $\neg \forall v_n \neg \psi(v_0/x_0, \dots, v_{n-1}/x_{n-1}, v_n)$. Here they would have needed the principle $\exists x \psi \leftrightarrow \neg \forall x  \neg \psi$, but since the addition of this schema would imply the provability of $\exists x \psi \rightarrow \exists x \neg \neg \psi$, it cannot be a theorem of the system in \cite{Bell&Slomson}.}
  \hspace*\fill$\blacktriangleleft$  
    \end{Rmk}

    \section{Conclusion}
    
    This article draws attention to a relatively subtle point: completeness of proof systems – here illustrated by axiomatic or `Hilbert' systems – is very sensitive to  the choice of \emph{all} logical primitives, not only the propositional connectives. This  did not appear to be sufficiently well known, as witnessed by the error in \cite{Bell&Slomson}. As we mentioned before, H. Hi{\. z} \cite{Hiz} had already  warned that a  `translation of a complete set of axioms to another set of primitives would be complete only if from the resulting axioms the definitions of the first set of primitives followed'.  In this paper we have shown exactly why things go wrong when the advice is not followed at the level of the quantifiers: we can lose replacement of equivalents in our logic.\footnote{The need for checking against the loss (under change of primitives) of this replacement property, known in modal logic as congruentiality, was stressed already in \cite{Makinson}.} Observe that, at the propositional level, already both Hi{\. z}'s counterexample in  \cite{Hiz}  (that is, $\neg(p\wedge \neg p)$) and Shapiro's in \cite{Shapiro} (namely $(p\wedge p) \rightarrow p$) can be trivially interpreted as displaying a failure of replacement of equivalents.

    \bibliographystyle{acm}
\bibliography{BCH.bib}

\end{document}